\title{On the values taken by slice torus invariants}
\author{Peter Feller}
\address{ETH Zurich, R\"amistrasse 101, 8092 Zurich, Switzerland}
\email{\myemail{peter.feller@math.ch}}
\urladdr{\url{https://people.math.ethz.ch/~pfeller/}}
\author{Lukas Lewark}
\address{Faculty of Mathematics, University of Regensburg, 93053 Regensburg, Germany}
\email{\myemail{lukas@lewark.de}}
\urladdr{\url{http://www.lewark.de/lukas/}}
\subjclass{57K10, 57K18}
\author{Andrew Lobb}
\address{Mathematical Sciences, Durham University, UK}
\email{\myemail{andrew.lobb@durham.ac.uk}}
\urladdr{\url{http://www.maths.dur.ac.uk/users/andrew.lobb/}}
  \def\unskip{}%
\let\cref\Cref
\crefname{subsection}{subsection}{subsections}
\Crefname{subsection}{Subsection}{Subsections}
\Crefname{enumi}{}{}
\crefname{equation}{}{}
\definecolor{darkblue}{RGB}{0,0,96}
\definecolor{gray}{RGB}{127,127,127}
\definecolor{darkred}{RGB}{160,0,0}
\definecolor{lightyellow}{RGB}{255,255,128}
\newcommand{\myemail}[1]{\href{mailto:#1}{#1}}
\newcommand{\qua}{\hskip 0.4em \ignorespaces}
\def\arxiv#1{\relax\ifhmode\unskip\qua\fi
\href{http://arxiv.org/abs/#1}%
{\tt arXiv:\penalty -100\unskip#1}}
\def\MR#1{\relax\ifhmode\unskip\qua\fi
\href{http://www.ams.org/mathscinet-getitem?mr=#1}{\tt MR#1}}
\def\xox#1{\csname xx#1\endcsname}
\declaretheorem{lemma}
\newtheorem{theorem}[lemma]{Theorem}
\newtheorem{proposition}[lemma]{Proposition}
\newtheorem{conjecture}[lemma]{Conjecture}
\newtheorem*{prize*}{Prize}
\newtheorem*{theorem*}{Theorem}
\newtheorem{question}[lemma]{Question}
\newtheorem*{question*}{Question}
\theoremstyle{definition}
\newtheorem{definition}[lemma]{Definition}
\newtheorem{remark}[lemma]{Remark}
\newtheorem{example}[lemma]{Example}
\DeclareMathOperator{\cl}{cl}
\DeclareMathAlphabet{\mathpzc}{OT1}{pzc}{m}{it}
\newcommand{\cC}{\mathcal{C}}
\newcommand{\Z}{\mathbb{Z}}
\newcommand{\Q}{\mathbb{Q}}
\newcommand{\R}{\mathbb{R}}
\newcommand{\Hom}{\text{Hom}}
\begin{document}
\thispagestyle{empty}
\begin{abstract}
	We study the space of slice-torus invariants.  In particular we characterize the set of values that slice-torus invariants may take on a given knot in terms of the stable smooth slice genus.
        Our study reveals that the resolution of the local Thom conjecture implies the existence of slice torus invariants without having to appeal to any explicit construction from a knot homology theory.
\end{abstract}
\maketitle
\section{Introduction}
A fruitful approach to understanding a group is to construct homomorphisms on it; the group of interest in this paper is the smooth concordance group $\cC$ of knots.
A classical example of such homomorphisms is given by the Levine-Tristram signatures $\cC \rightarrow \R$, which were used in 
Litherland's proof
that positive non-trivial torus knots $T_{p,q}$ are linearly independent in $\cC$ \cite{MR547456}.
Signatures also provide lower bounds for the smooth slice genus $g_4(K) \in \Z_{\geq 0}$ of a knot $K$
but, in the case of torus knots, these bounds are not sufficient to determine $g_4(T_{p,q})$.
In fact, that we have $g_4(T_{p,q}) = (|p|-1)(|q|-1)/2$, which is known as the local Thom conjecture, was first shown
by Kronheimer and Mrowka, as a consequence of their resolution of the Thom conjecture~\cite{thom} using gauge theory.
This article is concerned with a class of homomorphisms $\cC\to\R$ that is much younger than signatures,
namely slice-torus invariants, whose definition goes back to Livingston~\cite{livingston} (see also \cite{lew2}).
\begin{definition}
	\label{defn:slice_torus}
	A \emph{slice-torus invariant} is a homomorphism $\phi \colon \cC \rightarrow \R$ satisfying two conditions:\\[1ex]
	\begin{tabular}{@{\hspace{3em}}l@{\ }lr@{\ }ll}
		    & \textsc{Slice:} & $\phi(K)$ & $\leq g_4(K)$ & for all knots $K$ and \\
		    & \textsc{Torus:} & $\phi(T_{p,q})$ & $= g_4(T_{p,q})$ & for all positive coprime integers $p,q$.
	\end{tabular}
\end{definition}
Note that it is quite non-trivial that such invariants do exist.
Using suitable normalizations, the first slice-torus invariant to be constructed was the $\tau$ invariant coming from knot Floer homology \cite{osz10,ras},
followed by the Rasmussen invariant $s$ coming from Khovanov homology~\cite{ras3},
and the $s_n$ invariants coming from $\mathfrak{sl}_n$ Khovanov-Rozansky homologies \cite{wu3,lobb1,lobb2}.
We study the set $V \subset \Hom(\cC, \R)$ of all slice torus invariants.
Note that $V$ is non-empty and convex.
It follows that for each $K\in \cC$, the set $V(K) \coloneqq \{\phi(K)\mid\phi\in V\}\subset\R$ is a nonempty interval.
The main result of this note provides a description of these intervals,
in terms of the \emph{stable smooth slice genus} (compare~\cite{livingston_stable})
$\displaystyle\widehat{g_4}(K) \coloneqq \lim_{n\to\infty} g_4(K^{\# n})/n$.

\begin{proposition}\label{lemma:stivalues}
For every knot $K$, the sequence $t_1(K), t_2(K), \ldots$ defined as
\[
t_p(K) \coloneqq
\widehat{g_4}(T_{p,p+1} \# K)
- \widehat{g_4}(T_{p,p+1})
\]
is decreasing and convergent.
Its limit $\ell(K)$ satisfies $-\ell(-K)\leq\ell(K)$.
\end{proposition}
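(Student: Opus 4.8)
The plan is to deduce everything from two inputs of the resolution of the local Thom conjecture, together with formal properties of the stable slice genus. First I would record that $g_4$ is subadditive under connected sum, so that $\widehat{g_4}(K)=\inf_n g_4(K^{\#n})/n$ exists by Fekete's lemma, and that $\widehat{g_4}$ is in turn subadditive, nonnegative, a concordance invariant, satisfies $\widehat{g_4}(-K)=\widehat{g_4}(K)$, and has the property that a genus-$g$ cobordism from $J_0$ to $J_1$ yields $\widehat{g_4}(J_1)\le\widehat{g_4}(J_0)+g$ (cap the $n$-fold connected sum of the cobordism with itself onto a minimal surface for $J_0^{\#n}$, then divide by $n$). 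The first input is that $\widehat{g_4}(T_{p,p+1})=g_4(T_{p,p+1})=p(p-1)/2$: the $m$-fold connected sum $T_{p,p+1}^{\#m}$ is the closure of a positive braid on $m(p-1)+1$ strands with $m(p^2-1)$ crossings, and for a positive braid closure the Bennequin surface realizes the Seifert genus and, by the slice--Bennequin inequality (a consequence of \cite{thom}), also the slice genus; this gives $g_4(T_{p,p+1}^{\#m})=m\,p(p-1)/2$, whence the claim.

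\emph{The cobordism input.} The second ingredient is the classical fact that there is a genus-$p$ cobordism from $T_{p,p+1}$ to $T_{p+1,p+2}$. Braid-theoretically: with $\delta_p=\sigma_1\cdots\sigma_{p-1}$, the knot $T_{p+1,p+2}$ is the closure of $(\delta_p\sigma_p)^{p+2}\in B_{p+1}$; deleting the final $\sigma_p$ from each of the first $p+1$ syllables and the $\delta_p$ from the last syllable --- $2p$ band deletions altogether --- leaves $\delta_p^{p+1}\sigma_p$, whose closure is $T_{p,p+1}$ after one Markov destabilization. Each deletion of a band is a saddle move on the closures, so the $2p$ saddles trace out a cobordism of Euler characteristic $-2p$, hence of genus $p$, from $T_{p,p+1}$ to $T_{p+1,p+2}$. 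I expect this geometric input, together with the first input, to be the only place where the local Thom conjecture is genuinely used; the remaining two steps are formal.

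\emph{Monotonicity and convergence.} Writing
\[
t_{p+1}(K)-t_p(K)=\bigl(\widehat{g_4}(T_{p+1,p+2}\#K)-\widehat{g_4}(T_{p,p+1}\#K)\bigr)-\bigl(\widehat{g_4}(T_{p+1,p+2})-\widehat{g_4}(T_{p,p+1})\bigr),
\]
connect-summing the genus-$p$ cobordism above with the product cobordism on $K$ gives a genus-$p$ cobordism from $T_{p,p+1}\#K$ to $T_{p+1,p+2}\#K$, so the first parenthesis is at most $p$; the second parenthesis equals $p(p+1)/2-p(p-1)/2=p$ by the first input. Hence $t_{p+1}(K)\le t_p(K)$. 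The sequence is bounded below because $\widehat{g_4}\bigl((T_{p,p+1}\#K)\#(-K)\bigr)\le\widehat{g_4}(T_{p,p+1}\#K)+\widehat{g_4}(-K)$ while the left side equals $\widehat{g_4}(T_{p,p+1})$, giving $t_p(K)\ge-\widehat{g_4}(K)$. A decreasing, bounded-below sequence converges; this defines $\ell(K)$.

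\emph{The limiting inequality.} To prove $-\ell(-K)\le\ell(K)$ I would establish $t_p(K)+t_p(-K)\ge 0$ for all $p$ and pass to the limit. The connected sum $(T_{p,p+1}\#K)\#(T_{p,p+1}\#(-K))$ is concordant to $T_{p,p+1}^{\#2}$ because $K\#(-K)$ is slice, so subadditivity of $\widehat{g_4}$ together with $\widehat{g_4}(T_{p,p+1}^{\#2})=2\widehat{g_4}(T_{p,p+1})$ (immediate from the definition of $\widehat{g_4}$) gives
\[
2\widehat{g_4}(T_{p,p+1})\le\widehat{g_4}(T_{p,p+1}\#K)+\widehat{g_4}(T_{p,p+1}\#(-K)),
\]
i.e.\ $t_p(K)+t_p(-K)\ge0$. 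Letting $p\to\infty$ yields $\ell(K)+\ell(-K)\ge0$, that is, $-\ell(-K)\le\ell(K)$.
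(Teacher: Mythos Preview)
Your proof is correct and follows essentially the same approach as the paper: monotonicity via the genus-$p$ cobordism between consecutive $T_{p,p+1}$'s, and the inequality $t_p(K)+t_p(-K)\ge 0$ from subadditivity of $\widehat{g_4}$ together with $\widehat{g_4}(T_{p,p+1}^{\#2})=2\widehat{g_4}(T_{p,p+1})$, yielding both boundedness below and the limiting inequality. The only cosmetic differences are that the paper packages the monotonicity step by gluing $n$-fold copies of the cobordism onto minimal slice surfaces (rather than invoking the cobordism bound for $\widehat{g_4}$ directly), and derives $\widehat{g_4}(T_{p,p+1})=g_4(T_{p,p+1})$ from its additivity lemma rather than by applying slice--Bennequin to $T_{p,p+1}^{\#m}$.
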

\begin{theorem}\label{thm:stivalues}
For every knot $K$,
the set $V(K) = \{\phi(K)\mid\phi\in V\}\subset\R$
of values taken by all slice-torus invariants on $K$ equals $[-\ell(-K), \ell(K)]$. 
\end{theorem}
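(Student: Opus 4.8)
The plan is to first recast slice‑torus invariants in terms of the seminorm $\widehat{g_4}$, then obtain the inclusion $V(K)\subseteq[-\ell(-K),\ell(K)]$ by a one‑line argument, and finally realize the two endpoints by a Hahn--Banach construction whose only substantial input is a genus‑additivity statement for positive torus knots. First I would record that $\widehat{g_4}$ is a seminorm on $\cC$: it is symmetric, subadditive under connected sum, and satisfies $\widehat{g_4}(K^{\#n})=n\,\widehat{g_4}(K)$; hence it extends to a seminorm on $X:=\cC\otimes\R$ and is $1$‑Lipschitz for itself. For a homomorphism $\phi$, the condition $\phi\le g_4$ is equivalent to $\phi\le\widehat{g_4}$ (pass to $n$‑fold connected sums and divide by $n$), while additivity of the Seifert genus together with the resolution of the local Thom conjecture gives $\widehat{g_4}(T_{p,q})=g_4(T_{p,q})$ for all coprime $p,q\ge 1$. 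Thus $V$ is exactly the set of $\phi\in\Hom(\cC,\R)$ with $\phi\le\widehat{g_4}$ and $\phi(T_{p,q})=\widehat{g_4}(T_{p,q})$. For such $\phi$ one has $\phi(K)=\phi(T_{p,p+1}\#K)-\phi(T_{p,p+1})\le\widehat{g_4}(T_{p,p+1}\#K)-\widehat{g_4}(T_{p,p+1})=t_p(K)$, so $\phi(K)\le\ell(K)$ by \cref{lemma:stivalues}; applying this to $-K$ and using $V(-K)=-V(K)$ yields $V(K)\subseteq[-\ell(-K),\ell(K)]$. Since $V(K)$ is a nonempty interval, it now suffices (once more via $V(-K)=-V(K)$ and convexity) to produce $\phi\in V$ with $\phi(K)=\ell(K)$.

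To do so, define $\sigma\colon X\to\R$ by
\[
\sigma(x)=\inf\Bigl\{\,\widehat{g_4}\bigl(x-\lambda K-\textstyle\sum_i\mu_iT_{p_i,q_i}\bigr)+\lambda\,\ell(K)+\textstyle\sum_i\mu_i\,\widehat{g_4}(T_{p_i,q_i})\,\Bigr\},
\]
the infimum over finitely supported real tuples $(\lambda,(\mu_i))$. Combining the reverse triangle inequality with the \emph{Genus Estimate}
\[
\widehat{g_4}\bigl(\lambda K+\textstyle\sum_i\mu_iT_{p_i,q_i}\bigr)\ \ge\ \lambda\,\ell(K)+\textstyle\sum_i\mu_i\,\widehat{g_4}(T_{p_i,q_i})\qquad\text{(for all such tuples)}
\]
shows that every term in the infimum is at least $-\widehat{g_4}(x)$, so that $\sigma$ is a finite sublinear functional with $\sigma\le\widehat{g_4}$, $\sigma(\pm K)\le\pm\ell(K)$ and $\sigma(\pm T_{p,q})\le\pm\widehat{g_4}(T_{p,q})$. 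Hahn--Banach then extends the zero functional on $\{0\}$ to a linear $\phi\le\sigma$ on $X$; by the first paragraph $\phi$ is a slice‑torus invariant, and $\phi(K)\le\ell(K)$ together with $-\phi(K)=\phi(-K)\le-\ell(K)$ forces $\phi(K)=\ell(K)$. Everything thus reduces to the Genus Estimate, which by continuity of $\widehat{g_4}$ on $X$, density of rational tuples and homogeneity reduces in turn to its integral form: for $a,b_i\in\Z$,
\[
\widehat{g_4}\bigl(aK\,\#\,\textstyle\#_i\,b_iT_{p_i,q_i}\bigr)\ \ge\ a\,\ell(K)+\textstyle\sum_i b_i\,g_4(T_{p_i,q_i}).
\]

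Here the inequality $\ell(K)\ge-\ell(-K)$ from \cref{lemma:stivalues} reduces the case $a<0$ to the case $a>0$ applied to $-K$, and the case $a=0$ is subadditivity together with additivity of $\widehat{g_4}$ on connected sums of positive torus knots. For $a\ge 1$ I would write the torus part as $P\#(-Q)$ with $P,Q$ connected sums of positive torus knots (so $\widehat{g_4}(Q)=\sum_{b_i<0}|b_i|g_4(T_{p_i,q_i})$ and $\widehat{g_4}(P)-\widehat{g_4}(Q)=\sum_i b_ig_4(T_{p_i,q_i})$); subadditivity then reduces the claim to $\widehat{g_4}(aK\#P)\ge a\,\ell(K)+\widehat{g_4}(P)$. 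Assume for the moment that there is an $r$ with $\widehat{g_4}(T_{r,r+1}^{\#a}\#(-P))=\widehat{g_4}(T_{r,r+1}^{\#a})-\widehat{g_4}(P)$; then, using subadditivity, the homogeneity identity $\widehat{g_4}(aK\#T_{r,r+1}^{\#a})=\widehat{g_4}((K\#T_{r,r+1})^{\#a})=a\,\widehat{g_4}(K\#T_{r,r+1})$, and $\widehat{g_4}(T_{r,r+1}^{\#a})=a\,g_4(T_{r,r+1})$, one gets
\begin{align*}
\widehat{g_4}(aK\#P)
&\ \ge\ \widehat{g_4}\bigl(aK\#T_{r,r+1}^{\#a}\bigr)-\widehat{g_4}\bigl(T_{r,r+1}^{\#a}\#(-P)\bigr)\\
&\ =\ a\bigl(\widehat{g_4}(T_{r,r+1}\#K)-g_4(T_{r,r+1})\bigr)+\widehat{g_4}(P)\ =\ a\,t_r(K)+\widehat{g_4}(P),
\end{align*}
and $t_r(K)\ge\ell(K)$ because $(t_p)$ decreases to $\ell(K)$ by \cref{lemma:stivalues}.

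The remaining input --- which I expect to be the main obstacle --- is the assumed identity, equivalently that for each connected sum $P$ of positive torus knots there is a large $r$ for which $T_{r,r+1}^{\#a}\#(-P)$ bounds a surface of genus $\widehat{g_4}(T_{r,r+1}^{\#a})-\widehat{g_4}(P)$ in the $4$‑ball (the reverse inequality being subadditivity). This is precisely where the local Thom conjecture enters: it identifies $\widehat{g_4}$ with the Seifert genus on $P$ and on all positive torus knots in sight, so that the surface to be built must have genus $g_3(T_{r,r+1}^{\#a})-g_3(P)$; and such a surface exists because $P$ is a positive braid knot, hence squeezed --- for $r$ large a genus‑minimizing Seifert surface of $P$ embeds into the fibre surface of $T_{r,r+1}$ (a Murasugi summand of that of $T_{r,r+1}^{\#a}$) with complementary positive Hopf bands, giving a genus‑minimizing cobordism from $T_{r,r+1}^{\#a}$ to $P$ which tubes to the required surface for $T_{r,r+1}^{\#a}\#(-P)$; concretely, one embeds the positive braid word representing $P$ into $(\sigma_1\cdots\sigma_{r-1})^{r+1}$ up to positive (de)stabilizations. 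Once this genus‑additivity statement is established, the chain of inequalities closes, the Genus Estimate holds, and the theorem follows.
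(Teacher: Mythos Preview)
Your proof is correct and follows essentially the same route as the paper: both arguments reduce to Hahn--Banach applied over the seminorm $\widehat{g_4}$, and the substantive check in each case (your Genus Estimate, the paper's verification that the partial functional on $\mathcal{T}+\langle K\rangle$ is dominated by $\widehat{g_4}$) unwinds to the same chain of inequalities, whose only geometric input is the cobordism from a connected sum $P$ of positive torus knots to some $T_{r,r+1}$ of genus $g_4(T_{r,r+1})-g_4(P)$ (your final paragraph, the paper's \cref{lem:add}(i)). The packaging differs slightly: the paper defines the functional explicitly on $\mathcal{T}$, extends by hand to $\mathcal{T}+\langle K\rangle$ prescribing an \emph{arbitrary} $\lambda\in[-\ell(-K),\ell(K)]$, and then invokes Hahn--Banach; you instead encode all constraints at once in the sublinear $\sigma$, extend from $\{0\}$, and realize only the endpoint $\ell(K)$, recovering the full interval via convexity of $V$ --- a tidy shortcut, though not a different idea.
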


\begin{remark}\label{rmk:onlyp}
Our proof of \cref{thm:stivalues} uses the fact that $g_4(T_{p,p+1})=p(p-1)/2$ for all integers ${p\geq 1}$~\cite{thom}, but we do not use the \emph{a priori} existence of any slice torus invariant.
Thus, it follows from our proof that the local Thom conjecture implies the existence of slice torus invariants without the need of any explicit construction of a slice torus invariant. However, we note that from our proof it is not clear that there exist integer valued slice torus invariants such as $s$ or $\tau$ (suitably normalized), or even that $[-\ell(K), \ell(-K)]$ contains an integer for all knots $K$.
\end{remark}

\begin{example}
Let us explicitly calculate $V(K)$ for $K$ the $(2,-3,5)$ pretzel knot, which is $10_{125}$ in the knot table.
As above, let $s_n$ be the concordance invariant coming from $\mathfrak{sl}_n$ Khovanov-Rozansky homology.
Then $\tilde{s}_n \coloneqq s_n/2(n-1)$ is a slice-torus invariant, and $s_2$ is equal to the original Rasmussen invariant $s_2 = s$.
One may calculate that $\tilde s_2(K) = 1$
and furthermore, for all $n \geq 3$,
that $\tilde s_n(K) \in \{0, 1/(n-1)\}$; see~\cite{lew2}.
Since $\lim_{n\to \infty} \tilde s_n(K) = 0$,
it follows from \cref{thm:stivalues} that $[0,1] \subset V(K)$.

To show the converse inclusion $V(K) \subset [0,1]$, let us use the sharpened slice-Bennequin inequalities
\cite{lobb5,lew2}. We will only need the braid version of the inequalities as stated in \cref{eq:bennequin} below.
Denote by $\sigma_1, \ldots, \sigma_{k-1}$ the standard generators of the braid group $B_k$ on $k$ strands.
For $\beta$ a word in these generators, let
\[
O_{\pm}(\beta) = \# \{i \in \{1, \ldots, k -1 \}\mid \sigma_i^{\pm 1} \text{ does not appear in } \beta\}.
\]
Then for all slice-torus invariants $\phi\in V$, we have that the value taken on the closure~$\cl(\beta)$ satisfies
\begin{equation}\label{eq:bennequin}\tag{$*$}
2\phi(\cl(\beta)) \quad\in\quad [1 + w(\beta) - k + 2O_+(\beta),\ -1 + w(\beta) + k - 2O_-(\beta)],
\end{equation}
where $w(\beta)$ denotes the writhe of $\beta$.
The knot $K = 10_{125}$ is the closure of $\beta = \sigma_1^5 \sigma_2^{-1} \sigma_1^{-3} \sigma_2^{-1}\in B_3$.
Since $w(\beta) = 0$, $O_+(\beta) = 1$ and $O_-(\beta) = 0$, \cref{eq:bennequin} implies that
$\phi(K) \in [0, 1]$ for all $\phi \in V$, and thus $V(K) \subset [0,1]$.
All in all, we have shown that $V(K) = [0,1]$.

This computation of $V(K)$ yields further examples. Namely,
for all $a,b\in \Z$ with $a \geq 0$ it immediately follows that
\[
V(K^{\# a} \# T_{2,3}^{\# b}) = [b,a + b].
\]
Hence we have the following result.
\end{example}
\begin{proposition}\label{prop:integerrealize}
        Every nonempty compact interval with integral endpoints is realized as $V(J)$ for some knot $J$.\qed
\end{proposition}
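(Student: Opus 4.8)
The plan is to deduce \cref{prop:integerrealize} directly from the identity
\[
V\bigl(K^{\# a}\# T_{2,3}^{\# b}\bigr)=[b,\,a+b]
\]
established in the preceding example for $K=10_{125}$ and all $a\in\Z_{\geq 0}$, $b\in\Z$. So, given a nonempty compact interval $I\subset\R$ with integral endpoints, write $I=[m,M]$ with $m,M\in\Z$ and $m\le M$; set $a\coloneqq M-m\in\Z_{\geq 0}$ and $b\coloneqq m\in\Z$; and let $J\coloneqq K^{\# a}\# T_{2,3}^{\# b}$, interpreted (as in the example) via the group structure of $\cC$, so that $T_{2,3}^{\# b}$ for $b<0$ denotes the connected sum of $|b|$ copies of the concordance inverse of $T_{2,3}$. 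The displayed identity then gives $V(J)=[b,a+b]=[m,M]=I$, which is exactly the claim.

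To make the write-up self-contained I would spell out the few lines behind that identity. Every $\phi\in V$ is a homomorphism $\cC\to\R$, so $\phi(J)=a\,\phi(K)+b\,\phi(T_{2,3})$; the \textsc{Torus} condition pins down $\phi(T_{2,3})=g_4(T_{2,3})=1$ for every $\phi\in V$; and the example shows $\{\phi(K)\mid\phi\in V\}=V(K)=[0,1]$. Hence, as $\phi$ ranges over $V$, the value $\phi(J)=a\,\phi(K)+b$ ranges precisely over $a\cdot[0,1]+b=[b,a+b]$, using $a\ge 0$. Note that one never needs to control $\phi(K)$ and $\phi(T_{2,3})$ independently, since the latter is forced; this is what makes the computation clean.

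There is no genuine obstacle here: the whole content sits in the prior computation $V(10_{125})=[0,1]$, which in turn rests on \cref{thm:stivalues} together with the sharpened slice–Bennequin inequality \eqref{eq:bennequin}. The only point that deserves a word of care is that the interval identity must be invoked for the full range $a\in\Z_{\geq 0}$, $b\in\Z$ — in particular for negative $b$, i.e.\ with mirror summands — which is already the generality in which the example states it, so nothing further is required.
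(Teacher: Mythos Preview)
Your proposal is correct and follows exactly the paper's approach: the proposition is stated with a \qed immediately after the example, since the identity $V(K^{\# a}\# T_{2,3}^{\# b})=[b,a+b]$ for $a\geq 0$, $b\in\Z$ yields any integral interval by taking $a=M-m$, $b=m$. Your additional lines spelling out why $\phi(J)=a\,\phi(K)+b$ ranges over $[b,a+b]$ are a welcome elaboration of what the paper leaves as ``immediately follows.''
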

Beyond \cref{prop:integerrealize}, we do not know if any further intervals can be realized.
The following geography question thus remains open.

\begin{question}\label{q:intervals}
Which nonempty compact intervals arise as $V(J)$ for some knot $J$?
\end{question}
\subsection*{Acknowledgments}The first author gratefully acknowledges support by the SNSF Grant 181199.
The second author gratefully acknowledges support by the DFG, project no.~412851057.
\section{Squeezed knots}
Slice torus invariants all agree on the following class of knots.
\begin{definition}[\cite{sqz}]
A knot $K$ is called \emph{squeezed} if and only if
there exists a smooth oriented connected cobordism $C_+$ between a positive torus knot $T^+$ and $K$,
and a smooth oriented connected cobordism $C_-$ between $K$ and a negative torus knot $T^-$
such that $C_+ \cup C_-$ is a smooth oriented connected cobordism between $T^+$ and $T^-$ that is genus-minimizing.
\end{definition}
The reader may wish to try to prove the following proposition directly from the definitions.  It says, roughly speaking, that squeezed knots are boring from the point of view of slice-torus invariants.
\begin{proposition}[\cite{sqz}]
	\label{lem:slice_torus_boring_on_sqzd}
	If $\phi_1$ and $\phi_2$ are slice-torus invariants and $K$ is squeezed then we have that $\phi_1(K) = \phi_2(K)$.  
        \qed
\end{proposition}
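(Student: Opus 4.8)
The plan is to show that, for an arbitrary slice-torus invariant $\phi$, the value $\phi(K)$ is forced to equal a quantity depending only on the cobordisms $C_\pm$ from the definition of squeezed, and not on $\phi$. Write $g_\pm$ for the genus of $C_\pm$. Since $C_+$ and $C_-$ are connected and glued along the knot $K$, Euler characteristics add across the gluing circle, so $C_+ \cup C_-$ is a connected cobordism from $T^+$ to $T^-$ of genus $g_+ + g_-$; by hypothesis this genus is minimal among connected cobordisms from $T^+$ to $T^-$.

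First I would record the standard cobordism inequality for slice-torus invariants: a connected genus-$g$ cobordism from a knot $A$ to a knot $B$ gives rise to a genus-$g$ surface in $B^4$ with boundary $A \# {-B}$, whence $g_4(A \# {-B}) \leq g$; since $\phi$ is a homomorphism and $\phi \leq g_4$, this yields $\phi(A) - \phi(B) = \phi(A \# {-B}) \leq g$. Applying this to $C_+$ and to $C_-$, and using the \textsc{Torus} axiom (together with the homomorphism property and mirror-invariance of $g_4$, which give $\phi(T^-) = -g_4(T^-)$ independently of $\phi$), we obtain
\[
g_4(T^+) - g_+ \;\leq\; \phi(K) \;\leq\; -g_4(T^-) + g_- .
\]

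It remains to see that this interval is a single point. Removing a small disk around an interior point of a genus-minimizing surface in $B^4$ bounded by $T^+$ produces a connected cobordism of genus $g_4(T^+)$ from $T^+$ to the unknot; concatenating with the analogous cobordism of genus $g_4(T^-)$ from the unknot to $T^-$ gives a connected cobordism from $T^+$ to $T^-$ of genus $g_4(T^+) + g_4(T^-)$. Since $C_+ \cup C_-$ is genus-minimizing, $g_+ + g_- \leq g_4(T^+) + g_4(T^-)$, so the upper bound above minus the lower bound equals $(g_+ + g_-) - (g_4(T^+) + g_4(T^-)) \leq 0$. Combined with the displayed inequalities, this forces $\phi(K) = g_4(T^+) - g_+$, which does not depend on $\phi$; hence $\phi_1(K) = \phi_2(K)$.

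The only step with genuine content is the last one: recognizing that the genus-minimality clause in the definition of squeezed is exactly what is needed to collapse the sandwiching interval. The remaining ingredients — turning a cobordism into a slice surface for the relevant connected sum, additivity of Euler characteristic across a gluing circle, and the evaluation of $\phi$ on torus knots and their mirrors — are routine, though one should take care to apply the cobordism inequality with the correct orientation, reading $C_+$ from $T^+$ to $K$ and $C_-$ from $K$ to $T^-$.
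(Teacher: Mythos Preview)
Your proof is correct and is precisely the direct argument from the definitions that the paper invites the reader to find; the paper itself does not write out a proof but cites \cite{sqz} for this proposition. The paper also notes an alternative derivation via its main theorem together with the fact that $-\ell(-K)=\ell(K)$ for squeezed $K$, but your sandwiching argument is the more elementary and self-contained route.
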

By \cref{thm:stivalues}, \cref{lem:slice_torus_boring_on_sqzd} also follows from the following.
\begin{proposition}\label{lem:sqzell}
If a knot $K$ is squeezed, then $-\ell(-K) = \ell(K)$.
\end{proposition}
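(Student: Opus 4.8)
The plan is to reduce the statement to \cref{lemma:stivalues}, which already gives $-\ell(-K)\le\ell(K)$, so that only the reverse inequality $\ell(K)+\ell(-K)\le 0$ remains; the idea is to run, with $\ell$ in place of a slice-torus invariant $\phi$, the argument behind \cref{lem:slice_torus_boring_on_sqzd}. First I would record the needed input. By the resolution of the local Thom conjecture \cite{thom}, $g_4$ agrees with the Seifert genus on positive braid closures; since a connected sum of torus knots is a positive braid closure, $g_4$ is additive on such connected sums, and hence $\widehat{g_4}$ agrees with $g_4$ on them. From this, subadditivity of $\widehat{g_4}$ under connected sum, and $\widehat{g_4}(T_{p,p+1})=g_4(T_{p,p+1})$, one gets $\ell(J)\le\widehat{g_4}(J)\le g_4(J)$ for every knot $J$. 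Recall also that a connected genus-$g$ cobordism from $J_0$ to $J_1$ exists if and only if $g_4(J_0\#(-J_1))\le g$. Finally, if such a cobordism $C$ exists, then for every knot $A$ one has $\widehat{g_4}(A\#J_0)\le g+\widehat{g_4}(A\#J_1)$ (boundary connect $C$ with the identity cobordism of $A$, pass to $n$-fold boundary connected sums, and cap off a minimal surface for the target); taking $A=T_{p,p+1}$ gives $t_p(J_0)\le g+t_p(J_1)$ for all $p$, hence $\ell(J_0)\le g+\ell(J_1)$.

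Second, I would evaluate $\ell$ on torus knots. For a positive torus knot $T$ the knot $T_{p,p+1}\#T$ is a positive braid closure, so $\widehat{g_4}(T_{p,p+1}\#T)=g_4(T_{p,p+1})+g_4(T)$, whence $t_p(T)=g_4(T)$ for all $p$ and $\ell(T)=g_4(T)$. Now let $T$ be a positive torus knot and consider the negative torus knot $-T$. By \cref{lemma:stivalues} and the case just treated, $\ell(-T)\ge-\ell(T)=-g_4(T)$. For the reverse inequality it suffices, by the first paragraph and the monotonicity of $(t_p)$, to produce for some large $p$ an \emph{optimal} connected cobordism from $T$ to $T_{p,p+1}$, i.e.\ one of genus exactly $g_4(T_{p,p+1})-g_4(T)$: such a cobordism gives $\widehat{g_4}(T_{p,p+1}\#(-T))\le g_4(T_{p,p+1}\#(-T))\le g_4(T_{p,p+1})-g_4(T)$, hence $t_p(-T)\le-g_4(T)$ and $\ell(-T)\le-g_4(T)$, so $\ell(-T)=-g_4(T)$. \textbf{The existence of such optimal cobordisms between torus knots is the step I expect to be the main obstacle.} It should follow from the known behaviour of the cobordism distance between torus knots---for instance by fitting a stabilized positive braid word for $T$ inside the positive braid word $(\sigma_1\cdots\sigma_{p-1})^{p+1}$ representing $T_{p,p+1}$, or via a deformation of the associated plane curve singularities---but carrying this out carefully is where the real work lies.

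Third, I would assemble the squeezed-knot argument. Let $C_+$ be a genus-$g_+$ connected cobordism from a positive torus knot $T^+$ to $K$, and $C_-$ a genus-$g_-$ connected cobordism from $K$ to a negative torus knot $T^-$, chosen so that $C_+\cup C_-$ is genus-minimizing; put $a=g_4(T^+)$ and $b=g_4(-T^-)$, where $-T^-$ is a positive torus knot. Since the minimal genus of a connected cobordism from $T^+$ to $T^-$ equals $g_4(T^+\#(-T^-))=a+b$, and $C_+\cup C_-$ realizes it with genus $g_++g_-$, we have $g_++g_-=a+b$. Applying the cobordism inequality of the first paragraph (with $A=T_{p,p+1}$) to $C_-$ gives $t_p(K)\le g_-+t_p(T^-)$, hence $\ell(K)\le g_-+\ell(T^-)=g_--b$, using $\ell(T^-)=-b$ from the second paragraph. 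By the correspondence between cobordisms and slice genera, the cobordism $C_+$ is equivalent to a genus-$g_+$ connected cobordism from $-K$ to $-T^+$; applying the cobordism inequality to the latter gives $t_p(-K)\le g_++t_p(-T^+)$, hence $\ell(-K)\le g_++\ell(-T^+)=g_+-a$, using $\ell(-T^+)=-a$. Adding the two estimates, $\ell(K)+\ell(-K)\le(g_++g_-)-(a+b)=0$, which together with \cref{lemma:stivalues} yields $-\ell(-K)=\ell(K)$.

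Every step other than the construction of optimal cobordisms between torus knots in the second paragraph is routine bookkeeping with subadditivity of $\widehat{g_4}$, capping off cobordisms, and the local Thom conjecture, so that construction is the main obstacle to be overcome.
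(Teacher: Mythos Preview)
Your proposal is correct, and the step you flag as the main obstacle---the existence of an optimal cobordism from any positive torus knot $T$ to some $T_{p,p+1}$---is precisely what the paper isolates as \cref{lem:add}(i) and proves by the positive-braid argument you sketch (replacing each $\sigma_i$ in a positive braid word for $T$ by $\sigma_1\cdots\sigma_{k-1}$ and padding to reach $(\sigma_1\cdots\sigma_{p-1})^{p+1}$). So your identification of where the work lies matches the paper exactly.

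The route, however, differs from the paper's. You first develop auxiliary results---the values $\ell(T)=g_4(T)$ and $\ell(-T)=-g_4(T)$ on torus knots, and a general cobordism inequality $\ell(J_0)\le g+\ell(J_1)$---and then feed the squeezing cobordisms $C_\pm$ into this machinery to obtain $\ell(K)\le g_--b$ and $\ell(-K)\le g_+-a$. The paper instead uses the optimal-cobordism lemma at the outset to replace $T^\pm$ by $\pm T_{p,p+1}$ for a single large $p$, and then bounds $t_p(K)+t_p(-K)$ directly by $g(C_+)+g(C_-)-g_4(T_{p,p+1}\# T_{p,p+1})=0$, without ever computing $\ell$ on torus knots or stating a separate cobordism inequality for $\ell$. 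Your approach is a bit longer but more modular: the intermediate facts (values of $\ell$ on torus knots, the cobordism inequality for $\ell$) are of independent interest and make the squeezed-knot step a clean two-line computation. The paper's approach is shorter and keeps everything at the level of a fixed $t_p$, avoiding the need to pass to the limit twice.
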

\begin{proof}
Let $C_{\pm}$ and $T^{\pm}$ be chosen as in the definition of squeezedness applied to $K$.
We may assume that for some large $p>0$, the $T^\pm$ satisfy $T^+ = T := T(p,p+1)$ and $T^- = -T$.  This is because for any positive torus knot $L$ there exists a $p > 0$ such there is a genus-minimizing slice surface for $T(p,p+1)$ that factors through~$L$ (see \cref{lem:add}~(i)).
Then, we have
\begin{align*}
\ell(-K) + \ell(K) & \leq t_p(-K) + t_p(K) \\
\intertext{by the monotonicity of $t_p(K)$ shown in \cref{lemma:stivalues}. By definition of $t_p$ this equals}
 & = \widehat{g_4}(T\# \,{-K}) + \widehat{g_4}(T\# K) - 2\widehat{g_4}(T). \\
\intertext{It is well known (and we provide a proof in \cref{lem:add}~(iv)) that slice genus and stable slice genus of torus knots agree; therefore we find the equality}
 & = \widehat{g_4}(T\# \,{-K}) + \widehat{g_4}(T\# K) - g_4(T\# T). \\
\intertext{Since $\widehat{g_4} \leq g_4$ for all knots,}
 & \leq g_4(T\# \,{-K}) + g_4(T\# K) - g_4(T\# T). \\
\intertext{For all $J, J'$, $g_4(J\#\,{-J'})$ equals the cobordism distance between $J$ and~$J'$, and so}
 & \leq g(C_+) + g(C_-) - g_4(T\# T),
\end{align*}
which equals $0$ because of the assumption that $C_+\cup C_-$ is a genus-minimizing cobordism between $T$ and $-T$.
\end{proof}
The proof of \cref{lem:sqzell} shows that if $K$ is squeezed,
then the sequence $t_p(K)$ is constant for sufficiently large $p$. We do not know whether this is the case for all knots:
\begin{question} 
	We ask the following.
	\begin{enumerate}[label=(\roman*)]
\item \label{i} Is $\ell(K)$ an integer for all knots $K$?

\item \label{ii}(Stronger) Does the sequence $\widehat{g_4}(T_{p,p+1} \# K)$ have only finitely many non-integer values for every fixed knot $K$?

\item \label{iii}(Strongest) Does $\widehat{g_4}(T_{p,p+1} \# K)=g_4(T_{p,p+1} \# K)$ hold for all but finitely many $p$ for every fixed knot?
\end{enumerate}
\end{question}
\begin{remark}
If~\Cref{i} can be answered positively, then~\Cref{q:intervals} is resolved: the intervals that occur as $V(J)$ for some knot are $[a,b]$ with $a\leq b$ integers.

If~\Cref{iii} 
can be answered positively, then $K$ satisfying $-\ell(-K) = \ell(K)$ implies that $K$ is squeezed. This is seen as follows.
If a knot $K$ satisfies $-\ell(-K) = \ell(K)$,
then we have for some $p>0$
\[ g_4(T_{p,p+1} \# K) - g_4(T_{p,p+1}) = \ell(K) = -\ell(-K) = g_4(T_{p,p+1}) - g_4 ( T_{p,p+1}  \# -\!K ) \]
and hence
\[ g_4(T_{p,p+1} \# K) + g_4 (T_{p,p+1} \# -\!K ) = 2 g_4 ( T_{p,p+1} ) {\rm .} \]
The left hand side of the equation is the genus of a cobordism from $T_{p,p+1}$ to $-T_{p,p+1}$ that factors through~$K$. On the other hand, the right hand side is the minimal genus of a cobordism from $T_{p,p+1}$ to $-T_{p,p+1}$.  Thus we see that~$K$ must be squeezed.
\end{remark}

In light of this, we conjecture the converse of \cref{lem:sqzell}.
\begin{conjecture} For all knots $K$, $K$ is squeezed if and only if $-\ell(K)=\ell(-K)$.\end{conjecture}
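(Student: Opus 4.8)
Since the ``only if'' direction is exactly \cref{lem:sqzell}, the content of the conjecture is the converse: \emph{if $\ell(K)+\ell(-K)=0$, then $K$ is squeezed}. The plan is to push the problem from stable to ordinary slice genera and then isolate the one gap that remains. Write $T=T_{p,p+1}$ and set
\[
d_p\;\coloneqq\;g_4(T\#K)+g_4(T\#\,{-K})-g_4(T\#T)\;\in\;\Z_{\geq 0}.
\]
As recalled in the proof of \cref{lem:sqzell}, $g_4(J\#\,{-J'})$ is the cobordism distance between $J$ and $J'$, so the three terms of $d_p$ are the cobordism distances between $K$ and $\pm T$ and between $T$ and $-T$; hence $d_p\geq 0$ by the triangle inequality for the vertices $T$, $-T$, $K$, and $d_p=0$ holds precisely when some minimal-genus cobordism $T\to K$ and some minimal-genus cobordism $K\to{-T}$ glue, with genus adding, to a minimal-genus cobordism $T\to{-T}$. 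By \cref{lem:add}~(i) --- which, exactly as in the proof of \cref{lem:sqzell}, lets one take the two torus knots in the definition of squeezedness to be $\pm T_{p,p+1}$ for some large $p$ --- the condition ``$d_p=0$ for some $p$'' is equivalent to $K$ being squeezed. So it suffices to show that $\ell(K)+\ell(-K)=0$ forces $d_p=0$ for some $p$.

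Two observations pin down what is needed. First, \cref{lem:add}~(i) also shows that a minimal-genus cobordism $T_{p+1,p+2}\to\pm K$ can be taken to factor through $T_{p,p+1}$ at the extra genus cost $g_4(T_{p+1,p+2})-g_4(T_{p,p+1})=p$, whence a one-line computation gives $d_{p+1}\leq d_p$; so $(d_p)_p$ is a decreasing sequence of non-negative integers, and in particular converges. Second, by \cref{lemma:stivalues} and \cref{lem:add}~(iv) (stable and ordinary slice genus agree on torus knots),
\[
d_p\;=\;\bigl(t_p(K)+t_p(-K)\bigr)+\delta_p,
\]
where $\delta_p\coloneqq\bigl(g_4(T\#K)-\widehat{g_4}(T\#K)\bigr)+\bigl(g_4(T\#\,{-K})-\widehat{g_4}(T\#\,{-K})\bigr)\geq 0$ because $\widehat{g_4}\leq g_4$, and where $t_p(K)+t_p(-K)\to\ell(K)+\ell(-K)=0$ by the monotonicity part of \cref{lemma:stivalues} together with the hypothesis. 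Hence $d_p\to\lim\delta_p$, and since $d_p$ is a convergent sequence of non-negative integers it would suffice to know that $\delta_p\to 0$; in particular a positive answer to part~(iii) of the question above --- which asserts $\delta_p=0$ for all large $p$ --- would finish the proof, recovering the conditional argument from the Remark preceding the conjecture.

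The proof thus reduces to part~(iii) of that question, or to the slightly weaker statement $\delta_p\to 0$, and we expect this to be the real obstacle: one must show that stabilizing a knot by a sufficiently large torus knot $T_{p,p+1}$ destroys any gap between slice genus and stable slice genus. Two lines of attack seem plausible. The first is to exhibit $T_{p,p+1}\#K$ and $T_{p,p+1}\#\,{-K}$, for large $p$, as closures of braids for which the sharpened slice--Bennequin inequality \cref{eq:bennequin} degenerates to a single point (as it does for homogeneous braid words, where every generator occurs with only one sign), so that $g_4$ agrees with the value of a slice-torus invariant there; once $\phi(J)=g_4(J)$ for one slice-torus invariant $\phi$, we get $\widehat{g_4}(J)\geq\phi(J)=g_4(J)\geq\widehat{g_4}(J)$, hence $\delta$-contribution zero --- the computation for $10_{125}$, where a single large positive block $\sigma_1^5$ already forces such rigidity, is the model, and the point would be to realize this simultaneously for $\pm K$ and for general $K$. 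The second is to construct, for each $K$, a slice-torus invariant attaining $\ell(K)$ on $K$ and computing $g_4$ exactly on $T_{p,p+1}\#K$ for all large $p$, which would also settle part~(i) of the question above. Of the two, the braid-theoretic route looks the more promising, but either would require genuinely new input beyond the formal reductions above.
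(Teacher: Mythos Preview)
The statement you are attempting is presented in the paper as a \emph{conjecture}, not a theorem; the paper gives no proof. Your proposal is not a proof either, and you are candid about this: after a clean reduction you write that ``the proof thus reduces to part~(iii) of that question'' and then offer only speculative lines of attack.

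Your reduction is correct and essentially reproduces the conditional argument already spelled out in the Remark preceding the conjecture. The decomposition $d_p = (t_p(K)+t_p(-K)) + \delta_p$, the monotonicity $d_{p+1}\leq d_p$, and the observation that $\delta_p\to 0$ would suffice are all sound. But this is reformulation, not progress: the assertion that $g_4(T_{p,p+1}\#\,{\pm K}) - \widehat{g_4}(T_{p,p+1}\#\,{\pm K}) \to 0$ is precisely Question~(iii) (or a mild weakening of it), which the paper explicitly leaves open.

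Neither of your proposed attacks closes the gap. The braid-theoretic route would require, for an \emph{arbitrary} knot $K$, a braid word for $T_{p,p+1}\# K$ on which the sharpened slice--Bennequin interval \cref{eq:bennequin} collapses to a point; but that condition forces the braid to be homogeneous, and closures of homogeneous braids are already known to be squeezed --- so this approach can only ever recover cases where the conclusion is known for other reasons. The second route, constructing a slice-torus invariant that simultaneously realizes $\ell(K)$ and computes $g_4$ exactly on $T_{p,p+1}\# K$, presupposes exactly the control over $g_4$ of those connected sums that is missing. The genuine obstacle remains untouched: you have no mechanism to pass from the stable equality $\ell(K)+\ell(-K)=0$ back to an unstable equality $d_p=0$ for some $p$.
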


\section{Proof of the main theorem}
The stable 4-genus $\widehat{g_4}$ induces a seminorm on the vector space $\mathcal{C}\otimes \R$,
as discussed by Livingston \cite{livingston_stable} (Livingston states the result for the vector space $\mathcal{C} \otimes \Q$, but it easily extends to $\mathcal{C} \otimes \R$).
Moreover, every slice torus invariant $y$ gives rise to homomorphism $y\colon \mathcal{C} \otimes \mathbb{R} \to \mathbb{R}$, with $y\leq \widehat{g_4}$.
Here, our slightly abusive notation does not differentiate between $\widehat{g_4}$ and the induced seminorm,
$y$ and the induced homomorphism, nor between knots and the vectors they represent in $\mathcal{C}\otimes\R$.

In what follows, let $\mathcal{T}$ be the real subspace of $\mathcal{C}\otimes\mathbb{R}$ generated by torus knots,
and let $\mathcal{T}_+ \subset \mathcal{T}$ be the closed convex cone consisting of linear combinations of positive torus knots with non-negative coefficients.

Let us emphasize, as mentioned in \cref{rmk:onlyp}, that we only use the fact that $g_4(T_{p,p+1}) = p(p-1)/2$.
The realization that this fact is enough to determine $g_4$ for much larger classes of knots
is due to Rudolph~\cite{rudolph_QPasObstruction}.
For the sake of self-containedness, we include a short proof.
\begin{lemma}\label{lem:add}
	We have the following.
\begin{enumerate}[label=(\roman*)]
\item For all knots $K\in\mathcal{T}_+$, there is an integer $p \geq 1$ and a smooth cobordism~$C$
between $K$ and $T_{p,p+1}$ such that $g(C) = g_4(T_{p,p+1}) - g_4(K)$.
\item For all integers $p \geq 2$, there is a smooth cobordism~$C$ between $T_{p-1,p}$ and $T_{p,p+1}$ of genus $p-1$.
\item For all knots $K, K'\in \mathcal{T}_+$, we have $g_4(K \# K') = g_4(K) + g_4(K')$.
\item For all knots $K \in \mathcal{T}_+$, we have $\widehat{g_4}(K) = g_4(K)$.
\end{enumerate}
\end{lemma}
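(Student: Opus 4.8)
The plan is to prove the four statements in the order (ii), then (i) in the strengthened form in which $g_4(K)$ is replaced by the genus of the obvious Seifert surface of $K$, then (iii) and (iv), and finally (i) exactly as stated. The only input beyond elementary $4$-dimensional cobordism constructions will be the hypothesis $g_4(T_{p,p+1})=p(p-1)/2$; as pointed out by Rudolph, this one family of values already determines $g_4$ on the knots in $\mathcal{T}_+$.

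For (ii), present both torus knots as closures of positive braids on $p$ strands: $T_{p-1,p}=T_{p,p-1}=\cl\bigl((\sigma_1\cdots\sigma_{p-1})^{p-1}\bigr)$ and $T_{p,p+1}=\cl\bigl((\sigma_1\cdots\sigma_{p-1})^{p+1}\bigr)$, so that the two braid words differ by the insertion of $(\sigma_1\cdots\sigma_{p-1})^2$, that is, $2(p-1)$ positive crossings. Build the cobordism from $p-1$ elementary genus-one pieces, each attaching a single band in a collar of $S^3$ (a ``clasp'', allowing braid isotopy and conjugation along the way); since a connected cobordism between two knots has no births or deaths, $\chi=-2g$, so $p-1$ such pieces give a cobordism of genus exactly $p-1$. (One can also write this cobordism down directly as a subsurface relation between the two torus-knot fibre surfaces.) Note that, as $T_{p-1,p}$ and $T_{p,p+1}$ are both staircase torus knots, the hypothesis gives $g_4(T_{p,p+1})-g_4(T_{p-1,p})=p-1$, so this cobordism is genus-minimizing; composing such cobordisms shows that the staircase torus knot appearing as the target in (i) may be taken arbitrarily large.

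For (i), write $K=T_{a_1,b_1}\#\cdots\#T_{a_k,b_k}$ and let $\Sigma_K$ be the push-in into $D^4$ of the Bennequin surface of the positive connected-sum braid of $K$; it is a slice surface for $K$ of genus $g(\Sigma_K)=\sum_i(a_i-1)(b_i-1)/2$. The heart of the argument is to produce, for some $N$, a cobordism $C\colon K\to T_{N,N+1}$ assembled only from $1$-handles in a collar of $S^3$ — equivalently, to exhibit $\Sigma_K$ as an incompressible subsurface of the fibre surface $F$ of $T_{N,N+1}$ — and to verify that the number of handles is precisely $2\bigl(N(N-1)/2-g(\Sigma_K)\bigr)$, so that $g(C)=N(N-1)/2-g(\Sigma_K)$ (the count is forced by $\chi(C)=\chi(\Sigma_K)-\chi(F)$ and $\chi=-2g$). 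Concretely, one stabilises the connected-sum braid of $K$ up to $N$ strands and then fills it in to the torus braid $(\sigma_1\cdots\sigma_{N-1})^{N+1}$ by inserting the missing positive crossings in clasp pairs (using (ii) to pass between consecutive staircase torus knots as needed); that $K$ is the closure of a positive braid built out of the quasipositive torus-knot blocks is exactly what keeps all inserted crossings positive and the handle count tight. This embedding of positive-braid surfaces into torus-knot fibre surfaces is Rudolph's observation.

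Statements (iii) and (iv) then follow formally. Applying the cobordism $C$ to $K$ itself gives $N(N-1)/2=g_4(T_{N,N+1})\le g_4(K)+g(C)=g_4(K)+N(N-1)/2-g(\Sigma_K)$, hence $g_4(K)\ge g(\Sigma_K)$; together with the slice surface $\Sigma_K$ this yields $g_4(K)=g(\Sigma_K)=\sum_i(a_i-1)(b_i-1)/2$. Since $\Sigma_{K\#K'}=\Sigma_K\,\natural\,\Sigma_{K'}$, the right-hand side is additive under $\#$, which is (iii); iterating gives $g_4(K^{\#n})=n\,g_4(K)$ and so $\widehat{g_4}(K)=g_4(K)$, which is (iv). Finally, with $g(\Sigma_K)=g_4(K)$ in hand, the cobordism $C$ has $g(C)=g_4(T_{N,N+1})-g_4(K)$, which is (i) as stated. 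I expect the main obstacle to be the explicit cobordism in (i) (and, to a lesser extent, in (ii)): it is easy to write down some cobordism to a torus knot, but one must make its genus exactly minimal — no wasted handles — so that the known value of $g_4(T_{N,N+1})$ transfers to $K$; this tightness is precisely the delicate point in Rudolph's argument.
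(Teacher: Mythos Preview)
Your approach is essentially the same as the paper's: present $K$ as the closure of a positive braid, build a cobordism to some $T_{p,p+1}$ by inserting positive crossings (1-handles), and use the known value $g_4(T_{p,p+1})=p(p-1)/2$ together with the Seifert/Bennequin surface to pin down $g_4(K)$; parts (iii) and (iv) then follow formally, exactly as you say. The paper's construction in (i) is slightly more explicit---it first replaces each $\sigma_i$ by $\sigma_1\cdots\sigma_{k-1}$ to reach $T(k,l)$, then pads to $T(p,p+1)$---whereas you phrase it as embedding $\Sigma_K$ into the torus-knot fibre surface \`a la Rudolph, but these are two descriptions of the same phenomenon. One small wording slip in your (ii): the $2(p-1)$ inserted crossings are $2(p-1)$ bands, so your ``$p-1$ elementary genus-one pieces'' each consist of \emph{two} bands, not ``a single band''; the genus count $p-1$ is correct once this is fixed.
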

\begin{proof}
\noindent (i) Since $K$ is a connected sum of positive torus knots, it may in particular be written as closure
of a positive braid word $\beta\in B_k$ for some $k$. Assume that $\beta$ is the product of $l$ generators.
Replace each $\sigma_i$ in $\beta$ with $\sigma_1 \cdots \sigma_{k-1}$ to find a cobordism (consisting of $l(k-2)$ $1$-handles) from $K$ to the torus link $T(k,l)$.
Set $p = \max\{k, l-1\}$.
Compose this first cobordism with a cobordism from $T(k,l)$ to $T(k,p+1)$ given by $(p+1-l)(k-1)$ $1$-handles, and then with a further cobordism from $T(k,p+1)$ to $T(p,p+1)$ given by $(p-k)p$ $1$-handles.
In total, this yields a cobordism $C$ of genus $g(C) = p(p-1)/2 - (1 + l - k)/2$.
The triangle inequality implies $g_4(K) \geq g_4(T_{p,p+1}) - g(C) = (1 + l - k)/2$.
On the other hand, Seifert's algorithm applied to $\beta$ results in a Seifert surface of
genus $(1 + l - k)/2$ for $K$. Thus, $g_4(K) = (1 + l - k)/2$, and $g(C) = g_4(T_{p,p+1}) - g_4(K)$ as desired.\medskip

\noindent (ii) Note that $T_{p-1,p}$ is the closure of the braid $\beta = (\sigma_1\cdots \sigma_{p-1})^{p-1} \in B_{p}$.
The desired cobordism $C$ consists of $2(p-1)$ 1-handles and may be constructed by
appending $(\sigma_1\cdots \sigma_{p-1})^2$ to $\beta$, thus obtaining the braid $(\sigma_1\cdots \sigma_{p-1})^{p+1}$,
whose closure is $T_{p,p+1}$.
The existence of $C$ is also implicit in \cite[Proof~of~Theorem~2]{zbMATH06092636}, \cite[Example~20]{zbMATH06723030}, or follows from \cite[Theorem~2]{zbMATH06272609}.\medskip

\noindent (iii) As in (i), $K$ and $K'$  may be written as closures of positive braid words $\beta \in B_k, \beta' \in B_{k'}$ that are the product of $l$ and $l'$ generators, respectively. Then, $K \# K'$ is the closure of a positive braid word $\beta''\in B_{k + k' - 1}$ that is the product of $l + l'$ generators. As shown in (i), this implies that
\begin{align*}
g_4(K \# K') & = \frac{1 + l + l' - (k + k' - 1) -1 }2 \\
 & = \frac{1 + l - k}2 + \frac{1 + l' - k'}2 = g_4(K) + g_4(K').
\end{align*}

\noindent (iv) This directly follows from (iii) and the definition of $\widehat{g_4}$.
\end{proof}
We are now ready to proceed to prove \cref{lemma:stivalues} and \cref{thm:stivalues}.
\begin{proof}[Proof of \cref{lemma:stivalues}]
Let us first show that $t_p(K)$ is monotonically decreasing.
By \cref{lem:add}~(ii), for $p \geq 2$
there exists a smooth cobordism~$C$ of genus $g_4(T_{p,p+1}) - g_4(T_{p-1,p}) = p-1$
between $T_{p-1,p}$ and $T_{p,p+1}$.
Let $F$ be a genus-minimizing slice surface of $(T_{p-1,p} \# K)^{\# n}$.
Gluing $F$ to $C^{\# n}$ gives a slice surface $F'$ of $(T_{p,p+1} \# K)^{\# n}$
of genus $g(F') = g(F) + n(p-1)$. Thus
\begin{align*}
g_4((T_{p,p+1} \# K)^{\# n}) & ~\leq~ g_4((T_{p-1,p} \# K)^{\# n}) + n(p-1) \Rightarrow \\[1ex]
\frac{g_4((T_{p,p+1} \# K)^{\# n})}{n} - \frac{p(p-1)}{2} & ~\leq~ \frac{g_4((T_{p-1,p} \# K)^{\# n})}{n} - \frac{(p-1)(p-2)}{2} \Rightarrow \\[1ex]
t_p(K) & ~\leq~ t_{p-1}(K).
\end{align*}

Next, we observe that $t_p(K)$ is bounded below, and thus converges. Indeed,
\begin{align*}
t_p(K) + t_p(-K) & =
\widehat{g_4}(T_{p,p+1} \# K) + \widehat{g_4}(T_{p,p+1} \# \,{-K}) - 2g_4(T_{p,p+1}) \\
 & \geq \widehat{g_4}(T_{p,p+1} \# K \# T_{p,p+1} \# \,{-K}) - 2g_4(T_{p,p+1}) \\
 & = 2\widehat{g_4}(T_{p,p+1}) - 2g_4(T_{p,p+1}),
\end{align*}
which is zero by \cref{lem:add}~(iv).
Hence we have $t_p(K) \geq -t_p(-K) \geq -t_1(-K)$.
Finally, taking the limit $p\to\infty$ of $t_p(K) + t_p(-K)\geq 0$ also yields $\ell(K) + \ell(-K)\geq 0$, as desired.
\end{proof}
\begin{proof}[Proof of \cref{thm:stivalues}]
We first check that $\phi(K) \in [-\ell(-K), \ell(K)]$ for every slice-torus invariant $\phi$. 
For every $p$, we have
\begin{align*}
\phi(-K) & = \phi(T_{p,p+1} \# \,{-K}) + \phi(-T_{p,p+1}) \\
&= \phi(T_{p,p+1} \# \,{-K})  - \widehat{g_4}(T_{p,p+1}) \\
     & \leq \widehat{g_4}(T_{p,p+1} \# \,{-K}) - \widehat{g_4}(T_{p,p+1}) \\
     & = t_p(-K),
\end{align*}
where we used that $\phi(J)\leq \widehat{g_4}(J)$ for all knots $J$.
Taking the limit gives $-\phi(K)=\phi(-K) \leq \ell(-K)$, and, by replacing $K$ by $-K$, we find $\phi(K) \leq \ell(K)$. Hence, we have $\phi(K) \in [-\ell(-K), \ell(K)]$ as desired.

As last step of the proof,
for a given knot $K$ and a given real number $\lambda \in [-\ell(K), \ell(-K)]$, we need to construct a slice-torus invariant $\phi$
with $\phi(K) = \lambda$.
Positive non-trivial torus knots have linearly independent Levine-Tristram signatures~\cite{MR547456}.
Therefore they are linearly independent in $\mathcal{C}\otimes\R$
and form a basis of~$\mathcal{T}$.
Thus there is a unique
homomorphism $\phi''\colon \mathcal{T} \to \mathbb{R}$ with $\phi''(T_{p,q}) = g_4(T_{p,q})$ for all coprime positive $p,q$.
We claim that
\begin{equation}\tag{$\dagger$}\label{eq:phig4}
\phi''(P) = \widehat{g_4}(P)
\end{equation}
holds for all vectors $P\in\mathcal{T}_+$.
If $P$ is a knot, then \cref{eq:phig4} is true by \cref{lem:add}~(iii).
Since $\phi''(\xi P) = \xi \phi''(P) = \xi \widehat{g_4}(P) = \widehat{g_4}(\xi P)$ for all positive rationals $\xi$,
\cref{eq:phig4} also holds for $P$ equal to a rational multiple of a knot.
Thus we have that
\[ \phi '' \vert_{\mathcal{T}_+ \cap \mathcal{C} \otimes \Q} = \widehat{g_4} \vert_{\mathcal{T}_+ \cap \mathcal{C} \otimes \Q}, \]
but $\mathcal{T}_+ \cap \mathcal{C} \otimes \Q$ is a dense subset of $\mathcal{T}_+$ endowed with the subspace topology arising from the colimit topology of the Euclidean topologies on all finite-dimensional subspaces of $\mathcal{T}$.  The colimit topology is the finest topology such that for all finite-dimensional subspaces of $\mathcal{T}$, equipped with the Euclidean topology, the inclusion homomorphism into $\mathcal{T}$ is continuous.  Moreover, $\phi''$ and the restriction of $\widehat{g_4}$ to $\mathcal{T}$ are continuous functions with respect to the colimit topology since their restrictions to all finite dimensional subspaces are continuous.  Thus \cref{eq:phig4} holds for all $P\in\mathcal{T}_+$.

Now, all $T\in \mathcal{T}$ can be written as $P - P'$ with $P, P' \in \mathcal{T}_+$.
\[
\phi''(T) = \widehat{g_4}(P) - \widehat{g_4}(P') \leq \widehat{g_4}(P -P') = \widehat{g_4}(T).
\]
So the homomorphism $\phi''$ is dominated by~$\widehat{g_4}$, i.e.~$\phi''(T) \leq \widehat{g_4}(T)$ for all $T\in\mathcal{T}$.

We now proceed to construct the desired slice-torus invariant $\phi$.
Let us first consider the case that the given knot $K$ lies in $\mathcal{T}$.
Then it follows from \cref{lem:add}~(i) that $K$ is squeezed,
and so $-\ell(-K) = \ell(K)$ by \cref{lem:sqzell}.
Therefore $\lambda = \ell(K) = \phi(K)$ for all slice-torus invariants $\phi$.
So it is enough to show the existence of any slice-torus invariant.
The Hahn-Banach theorem implies that $\phi''$ extends to a homomorphism $\phi\colon \mathcal{C}\otimes \R \to \R$
that satisfies $\phi \leq \widehat{g_4}$ on all of $\mathcal{C}\otimes\R$.
Precomposing $\phi$ with the canonical map $\mathcal{C}\to\mathcal{C}\otimes\R, K\mapsto K\otimes 1$,
gives a slice-torus invariant.

Now, let us take care of the case that $K\not\in\mathcal{T}$.
Consider the space $\mathcal{T}_K = \mathcal{T} + \langle K\rangle$.
Set $\phi'(T + \mu K) = \phi''(T) + \mu \cdot \lambda$ for all vectors $T\in\mathcal{T}$ and reals $\mu\in\mathbb{R}$.
This is clearly a homomorphism $\mathcal{T}_K \to \R$. Let us check that it is dominated by $\widehat{g_4}$, i.e.\
$\phi'(T + \mu K) \leq \widehat{g_4}(T + \mu K)$ for all $T\in\mathcal{T}$ and $\mu\in\mathbb{R}$.
We claim that the case $\mu = 1$ quickly implies the general case.
Indeed, for $\mu > 0$, assuming the case $\mu = 1$,
we have
\[
\phi'(T + \mu K) = \mu \phi'(T/\mu + K) \leq \mu \widehat{g_4}(T/\mu + K) = \widehat{g_4}(T + \mu K).
\]
The case $\mu < 0$ follows from the case $\mu > 0$
since $T + \mu K = T + (-\mu) (-K)$.
So let us now show the case $\mu = 1$, i.e.~that
for all $T\in\mathcal{T}$ we have
\begin{equation}\tag{$\ddagger$}\label{phi'g4}
\phi'(T + K) \leq \widehat{g_4}(T + K).
\end{equation}
Let us first consider the case that $T$ is a knot in $\mathcal{T}_+$.
Then by \cref{lem:add}~(i), there exists a cobordism $C$ from $T$ to some
$T_{p,p+1}$ with genus $g(C) = \widehat{g_4}(T_{p,p+1}) - \widehat{g_4}(T)$.
We then have
\begin{align*}
\phi'(T + K) & =    \widehat{g_4}(T) + \lambda \\
          & \leq \widehat{g_4}(T) + \ell(K) \\
          & \leq \widehat{g_4}(T) + t_p(K) \\
          & = \widehat{g_4}(T) + \widehat{g_4}(T_{p,p+1}\# K) - \widehat{g_4}(T_{p,p+1}) \\
          & \leq \widehat{g_4}(T) + \widehat{g_4}(T_{p,p+1} \# \,{-T}) + \widehat{g_4}(T \# K) - \widehat{g_4}(T_{p,p+1}) \\
          & \leq \widehat{g_4}(T) + g(C) + \widehat{g_4}(T \# K) - \widehat{g_4}(T_{p,p+1}) \\
          & = \widehat{g_4}(T\# K).
\end{align*}
So, we have shown \cref{phi'g4} in case that $T$ is a knot in $\mathcal{T}_+$.
If $T \in \mathcal{T}_+$ such that $n T$ is a knot for some positive integer $n$,
then
\[
\phi'(T + K) = \tfrac1{n} \phi'(n T + K^{\# n}) \leq 
\tfrac1{n} \widehat{g_4}(n T + K^{\# n}) = \widehat{g_4}(T + K).
\]
Thus \cref{phi'g4} holds for all $T$ in $\mathcal{T}_+\cap \mathcal{C} \otimes \Q$. Similarly as in the proof of \cref{eq:phig4},
the denseness of $\mathcal{T}_+\cap \mathcal{C} \otimes \Q$ in $\mathcal{T}_+$ and the continuity of $\phi'$ and $\widehat{g_4}$
now imply that \cref{phi'g4} holds for all $T\in\mathcal{T}_+$.
In the general case that $T\in\mathcal{T}$, we may again write $T$ as $P-P'$ with $P,P'\in\mathcal{T}_+$.
Applying linearity of $\phi'$ and the triangle inequality for $\widehat{g_4}$, we find
\begin{align*}
\phi'(T + K) & = \phi'(-P') + \phi'(P + K) \\
          & \leq -\widehat{g_4}(P') + \widehat{g_4}(P + K) \\
          & \leq \widehat{g_4}(T + K).
\end{align*}
This concludes the proof that $\phi'$ is dominated by $\widehat{g_4}$ on $\mathcal{T}_K$.
By the Hahn-Banach theorem, $\phi'$ extends to a homomorphism $\phi\colon \mathcal{C}\otimes\mathbb{R} \to\mathbb{R}$
that is dominated on all of its domain by $\widehat{g_4}$. Precomposing $\phi$ with $\mathcal{C} \to \mathcal{C}\otimes\mathbb{R}$
gives the desired slice-torus invariant.
\end{proof}
 
\bibliographystyle{myamsalpha}
\bibliography{References}
\end{document}